\documentclass[12pt]{article}
\textwidth 16.5cm
\textheight 22.5cm
\oddsidemargin 0pt
\topmargin -1cm

\usepackage{latexsym,amsmath,amssymb,amsfonts,amsthm,bbm}
\usepackage{natbib}
\usepackage{enumerate}
\usepackage{graphicx}
\usepackage{graphics,enumerate}
\usepackage[x11names,svgnames]{xcolor}

\newtheorem{thm}{Theorem}

\newtheorem{lemma}[thm]{Lemma}
\newtheorem{corollary}[thm]{Corollary}

\begin{document}

\title{A useful variant of the Davis--Kahan theorem for statisticians}
\author{}
\author{Yi Yu, Tengyao Wang and Richard J. Samworth\\
Statistical Laboratory, University of Cambridge}

\date{(\today)}

\maketitle
\begin{abstract}
The Davis--Kahan theorem is used in the analysis of many statistical procedures to bound the distance between subspaces spanned by population eigenvectors and their sample versions.  It relies on an eigenvalue separation condition between certain relevant population and sample eigenvalues.  We present a variant of this result that depends only on a population eigenvalue separation condition, making it more natural and convenient for direct application in statistical contexts, and improving the bounds in some cases.  We also provide an extension to situations where the matrices under study may be asymmetric or even non-square, and where interest is in the distance between subspaces spanned by corresponding singular vectors.   
\end{abstract}

\section{Introduction}
\label{Sec:Intro}

Many statistical procedures rely on the eigendecomposition of a matrix.  Examples include principal components analysis and its cousin sparse principal components analysis \citep{Zouetal2006}, factor analysis, high-dimensional covariance matrix estimation \citep{Fanetal2013} and spectral clustering for community detection with network data \citep{DonathHoffman1973}.  In these and most other related statistical applications, the matrix involved is real and symmetric, e.g. a covariance or correlation matrix, or a graph Laplacian or adjacency matrix in the case of spectral clustering.  

In the theoretical analysis of such methods, it is frequently desirable to be able to argue that if a sample version of this matrix is close to its population counterpart, and provided certain relevant eigenvalues are well-separated in a sense to be made precise below, then a population eigenvector should be well approximated by a corresponding sample eigenvector.  A quantitative version of such a result is provided by the Davis--Kahan `$\sin \theta$' theorem \citep{DavisKahan1970}.  This is a deep theorem from operator theory, involving operators acting on Hilbert spaces, though as remarked by \citet{StewartSun1990}, its `content more than justifies its impenetrability'.  In statistical applications, we typically do not require this full generality; we state below a version in a form typically used in the statistical literature.  We write $\|\cdot\|$ and $\|\cdot\|_{\mathrm{F}}$ respectively for the Euclidean norm of a vector and the Frobenius norm of a matrix.  Recall that if $V, \hat{V} \in \mathbb{R}^{p \times d}$ both have orthonormal columns, then the vector of $d$ principal angles between their column spaces is given by $(\cos^{-1} \sigma_1,\ldots,\cos^{-1}\sigma_d)^T$, where $\sigma_1 \geq \ldots \geq \sigma_d$ are the singular values of $\hat{V}^T V$.  Let $\Theta(\hat{V},V)$ denote the $d \times d$ diagonal matrix whose $j$th diagonal entry is the $j$th principal angle, and let $\sin \Theta(\hat{V},V)$ be defined entrywise.
\begin{thm}[Davis--Kahan $\sin \theta$ theorem]
\label{Thm:DKSinTheta}
Let $\Sigma,\hat{\Sigma} \in \mathbb{R}^{p \times p}$ be symmetric, with eigenvalues $\lambda_1 \geq \ldots \geq \lambda_p$ and $\hat{\lambda}_1 \geq \ldots \geq \hat{\lambda}_p$ respectively.  Fix $1 \leq r \leq s \leq p$, let $d := s - r + 1$, and let $V = (v_r,v_{r+1},\ldots,v_s) \in \mathbb{R}^{p \times d}$ and $\hat{V} = (\hat{v}_r,\hat{v}_{r+1},\ldots,\hat{v}_s) \in \mathbb{R}^{p \times d}$ have orthonormal columns satisfying $\Sigma v_j = \lambda_j v_j$ and $\hat{\Sigma}\hat{v}_j = \hat{\lambda}_j \hat{v}_j$ for $j=r,r+1,\ldots,s$.  If $\delta := \inf\{|\hat{\lambda} - \lambda|: \lambda \in [\lambda_s,\lambda_r],\hat{\lambda} \in (-\infty,\hat{\lambda}_{s-1}] \cup [\hat{\lambda}_{r+1}, \infty)\} > 0$, where $\hat{\lambda}_0 := -\infty$ and $\hat{\lambda}_{p+1} := \infty$, then
\begin{equation}
\label{Eq:DKSinTheta}
\|\sin \Theta(\hat{V},V)\|_{\mathrm{F}} \leq \frac{\|\hat{\Sigma} - \Sigma\|_{\mathrm{F}}}{\delta}.
\end{equation}
\end{thm}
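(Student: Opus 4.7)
The plan is to reduce the $\sin\Theta$ bound to an estimate on $\|\hat V_\perp^T V\|_{\mathrm{F}}$, where $\hat V_\perp \in \mathbb{R}^{p\times(p-d)}$ is chosen so that $(\hat V,\hat V_\perp)$ is orthogonal and the columns of $\hat V_\perp$ are eigenvectors of $\hat\Sigma$ associated with the remaining eigenvalues, which I collect into a diagonal matrix $\hat\Lambda_\perp$ indexed by $\{1,\dots,p\}\setminus\{r,\dots,s\}$. Using $\hat V\hat V^T + \hat V_\perp\hat V_\perp^T = I_p$ together with $V^T V = I_d$, one obtains
$$
V^T\hat V_\perp\hat V_\perp^T V = I_d - V^T\hat V\hat V^T V,
$$
which is a $d\times d$ positive semidefinite matrix whose eigenvalues are $1-\sigma_j^2 = \sin^2(\cos^{-1}\sigma_j)$, where $\sigma_1,\dots,\sigma_d$ are the singular values of $\hat V^T V$. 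Taking traces yields $\|\hat V_\perp^T V\|_{\mathrm{F}}^2 = \|\sin\Theta(\hat V,V)\|_{\mathrm{F}}^2$, so the theorem reduces to bounding $\|\hat V_\perp^T V\|_{\mathrm{F}}$ by $\|\hat\Sigma - \Sigma\|_{\mathrm{F}}/\delta$.

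I would then analyse $A := \hat V_\perp^T(\hat\Sigma - \Sigma) V$ from two sides. Substituting $\hat V_\perp^T \hat\Sigma = \hat\Lambda_\perp\hat V_\perp^T$ (since the columns of $\hat V_\perp$ are eigenvectors of $\hat\Sigma$) and $\Sigma V = V\Lambda$, with $\Lambda = \mathrm{diag}(\lambda_r,\dots,\lambda_s)$, gives the Sylvester-type identity
$$
A = \hat\Lambda_\perp(\hat V_\perp^T V) - (\hat V_\perp^T V)\Lambda,
$$
whose $(i,j)$ entry equals $\bigl((\hat\Lambda_\perp)_{ii} - \Lambda_{jj}\bigr)(\hat V_\perp^T V)_{ij}$. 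By construction, $(\hat\Lambda_\perp)_{ii}$ and $\Lambda_{jj}$ are precisely the pairs of eigenvalues entering the definition of $\delta$, so the hypothesis yields $|(\hat\Lambda_\perp)_{ii} - \Lambda_{jj}| \ge \delta$ for every $(i,j)$, and squaring and summing gives the lower bound $\|A\|_{\mathrm{F}} \ge \delta\|\hat V_\perp^T V\|_{\mathrm{F}}$. For the complementary upper bound, since $\hat V_\perp$ and $V$ have orthonormal columns, neither left- nor right-multiplication by them increases the Frobenius norm, so $\|A\|_{\mathrm{F}} \le \|\hat\Sigma - \Sigma\|_{\mathrm{F}}$. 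Combining the two estimates with the identification from the first step delivers \eqref{Eq:DKSinTheta}.

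The main obstacle, in my view, is the first step: the identification $\|\sin\Theta(\hat V,V)\|_{\mathrm{F}} = \|\hat V_\perp^T V\|_{\mathrm{F}}$ is the only place where one has to translate the principal angle definition (via singular values of $\hat V^T V$) into an object that interacts cleanly with the spectral decomposition of $\hat\Sigma$. Once this translation is made and one commits to studying $\hat V_\perp^T(\hat\Sigma - \Sigma)V$, the remaining Sylvester manipulation and the use of the gap hypothesis are essentially routine; one only needs to verify that the entrywise separation $|(\hat\Lambda_\perp)_{ii} - \Lambda_{jj}|\ge\delta$ is secured in the boundary cases $r=1$ and $s=p$, which is exactly the role of the conventions $\hat\lambda_0 := -\infty$ and $\hat\lambda_{p+1} := \infty$ in the statement.
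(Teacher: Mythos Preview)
The paper does not actually prove Theorem~\ref{Thm:DKSinTheta}; it is stated as the classical Davis--Kahan $\sin\theta$ theorem and attributed to \citet{DavisKahan1970}, with no proof given in the text or the Appendix. So there is no ``paper's own proof'' to compare against for this particular statement.

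That said, your argument is correct and is essentially the standard proof. It also mirrors very closely the machinery the paper deploys in its proof of Theorem~\ref{Thm:Main}: the identification $\|\sin\Theta(\hat V,V)\|_{\mathrm F}=\|\hat V_\perp^T V\|_{\mathrm F}$ is exactly the computation the paper carries out with $V_1^T\hat V$ in~(\ref{Eq:SecondLower}); your Sylvester identity and entrywise gap argument correspond to the paper's Kronecker-product step $(\Lambda\otimes I_{p-d}-I_d\otimes\Lambda_1)\mathrm{vec}(V_1^T\hat V)$; and your upper bound $\|\hat V_\perp^T(\hat\Sigma-\Sigma)V\|_{\mathrm F}\le\|\hat\Sigma-\Sigma\|_{\mathrm F}$ is precisely Lemma~\ref{Lemma:Orthogonal}. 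The only structural difference is which side you put the complement on: you work with $\hat V_\perp^T V$ and the pair $(\hat\Lambda_\perp,\Lambda)$, whereas the paper's proof of Theorem~\ref{Thm:Main} works with $V_1^T\hat V$ and the pair $(\Lambda_1,\Lambda)$. Your choice is the right one here, because the separation hypothesis $\delta$ in Theorem~\ref{Thm:DKSinTheta} constrains distances between the $\lambda_j$ with $j\in\{r,\dots,s\}$ and the $\hat\lambda_k$ with $k\notin\{r,\dots,s\}$; had you taken the complement on the $\Sigma$ side instead, the entrywise differences would not be controlled by $\delta$.
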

In fact, both occurrences of the Frobenius norm in~(\ref{Eq:DKSinTheta}) can be replaced with the operator norm $\|\cdot\|_{\mathrm{op}}$, or any other orthogonally invariant norm.  Frequently in applications, we have $r = s = j$, say, in which case we can conclude that  
\[
\sin \Theta(\hat{v}_j,v_j) \leq \frac{\|\hat{\Sigma} - \Sigma\|_{\mathrm{op}}}{\min(|\hat{\lambda}_{j-1} - \lambda_j|,|\hat{\lambda}_{j+1} - \lambda_j|)}.
\]
Since we may reverse the sign of $\hat{v}_j$ if necessary, there is a choice of orientation of $\hat{v}_j$ for which $\hat{v}_j^T v_j \geq 0$.  For this choice, we can also deduce that $\|\hat{v}_j - v_j\| \leq \sqrt{2}\sin \Theta(\hat{v}_j,v_j)$.

This theorem is then used to show that $\hat{v}_j$ is close to $v_j$ as follows: first, we argue that $\hat{\Sigma}$ is close to $\Sigma$.  This is often straightforward; for instance, when $\Sigma$ is a population covariance matrix, it may be that $\hat{\Sigma}$ is just an empirical average of independent and identically distributed random matrices.  Then we argue, e.g. using Weyl's inequality, that with high probability, $|\hat{\lambda}_{j-1} - \lambda_j| \geq (\lambda_{j-1} - \lambda_j)/2$ and $|\hat{\lambda}_{j+1} - \lambda_j| \geq (\lambda_j - \lambda_{j+1})/2$, so on these events $\|\hat{v}_j - v_j\|$ is small provided we are willing to assume an eigenvalue separation, or eigen-gap, condition on the population eigenvalues.

The main contribution of this paper is to give a variant of the Davis--Kahan theorem in Theorem~\ref{Thm:Main} in Section~\ref{Thm:Main} below, where the only eigen-gap condition is on the population eigenvalues, by contrast with the definition of $\delta$ in Theorem~\ref{Thm:DKSinTheta} above.  Similarly, only population eigenvalues appear in the denominator of the bounds.  This means there is no need for the statistician to worry about the event where $|\hat{\lambda}_{j+1} - \lambda_{j+1}|$ or $|\hat{\lambda}_{j-1} - \lambda_{j-1}|$ is small.  In Section~\ref{Sec:Examples}, we give a selection of several examples where the Davis--Kahan theorem has been used in the statistical literature, and where our results could be applied directly to allow those authors to assume more natural conditions, to simplify proofs, and in some cases, to improve bounds.  

Singular value decomposition, which may be regarded as a generalisation of eigendecomposition, but which exists even when a matrix is not square, also plays an important role in many modern algorithms in Statistics and machine learning.  Examples include matrix completion \citep{CandesRecht2009}, robust principal components analysis \citep{Candesetal2009} and motion analysis \citep{Kukushetal2002}, among many others.  \citet{Wedin1972} provided the analogue of the Davis--Kahan theorem for such general real matrices, working with singular vectors rather than eigenvectors, but with conditions and bounds that mix sample and population singular values.  In Section~\ref{Sec:SVD}, we extend the results of Section~\ref{Sec:Main} to such settings; again our results depend only on a condition on the population singular values.  Proofs are deferred to the Appendix.

\section{Main results}
\label{Sec:Main}

\begin{thm}
\label{Thm:Main}
Let $\Sigma,\hat{\Sigma} \in \mathbb{R}^{p \times p}$ be symmetric, with eigenvalues $\lambda_1 \geq \ldots \geq \lambda_p$ and $\hat{\lambda}_1 \geq \ldots \geq \hat{\lambda}_p$ respectively.  Fix $1 \leq r \leq s \leq p$ and assume that $\min(\lambda_{r-1} - \lambda_r,\lambda_s - \lambda_{s+1}) > 0$, where $\lambda_0 := \infty$ and $\lambda_{p+1} := -\infty$.  Let $d := s - r + 1$, and let $V = (v_r,v_{r+1},\ldots,v_s) \in \mathbb{R}^{p \times d}$ and $\hat{V} = (\hat{v}_r,\hat{v}_{r+1},\ldots,\hat{v}_s) \in \mathbb{R}^{p \times d}$ have orthonormal columns satisfying $\Sigma v_j = \lambda_j v_j$ and $\hat{\Sigma}\hat{v}_j = \hat{\lambda}_j \hat{v}_j$ for $j= r,r+1,\ldots,s$.  Then
\begin{equation}
\label{Eq:OurSinTheta}
\|\sin \Theta(\hat{V},V)\|_{\mathrm{F}} \leq \frac{2\min(d^{1/2}\|\hat{\Sigma} - \Sigma\|_{\mathrm{op}},\|\hat{\Sigma} - \Sigma\|_\mathrm{F})}{\min(\lambda_{r-1} - \lambda_r,\lambda_s - \lambda_{s+1})}. 
\end{equation}
Moreover, there exists an orthogonal matrix $\hat{O} \in \mathbb{R}^{d \times d}$ such that 
\begin{equation}
\label{Eq:OurDifference}
\|\hat{V}\hat{O} - V\|_{\mathrm{F}} \leq \frac{2^{3/2}\min(d^{1/2}\|\hat{\Sigma} - \Sigma\|_{\mathrm{op}},\|\hat{\Sigma} - \Sigma\|_\mathrm{F})}{\min(\lambda_{r-1} - \lambda_r,\lambda_s - \lambda_{s+1})}.
\end{equation}
\end{thm}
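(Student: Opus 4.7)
The plan is to derive an identity that exhibits $\hat{V}$ as approximately satisfying an eigenequation with the \emph{population} (rather than sample) eigenvalues of the relevant block, so that the Sylvester-type inequality yields a bound depending only on the population gap $\delta_\star := \min(\lambda_{r-1}-\lambda_r,\lambda_s-\lambda_{s+1})$. Let $E := \hat{\Sigma} - \Sigma$, $\Lambda_I := \mathrm{diag}(\lambda_r,\ldots,\lambda_s)$ and $\hat{\Lambda}_I := \mathrm{diag}(\hat{\lambda}_r,\ldots,\hat{\lambda}_s)$, and complete $V$ to an orthonormal eigenbasis of $\Sigma$ by adjoining $V_\perp \in \mathbb{R}^{p \times (p-d)}$ whose columns are the remaining population eigenvectors; then $V_\perp^T \Sigma = \Lambda_\perp V_\perp^T$ for $\Lambda_\perp := \mathrm{diag}(\lambda_j : j \notin \{r,\ldots,s\})$, and $M := V_\perp^T \hat{V}$ satisfies $\|M\|_{\mathrm{F}} = \|\sin\Theta(\hat{V},V)\|_{\mathrm{F}}$. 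Writing $\Sigma = \hat{\Sigma} - E$ and using $\hat{\Sigma}\hat{V} = \hat{V}\hat{\Lambda}_I$ produces the key identity
\[
\Sigma\hat{V} - \hat{V}\Lambda_I \;=\; \hat{V}(\hat{\Lambda}_I - \Lambda_I) - E\hat{V}.
\]

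The next step is to bound the Frobenius (and operator) norms of the right-hand side. Since $\hat{V}$ has orthonormal columns, $\|\hat{V}(\hat{\Lambda}_I - \Lambda_I)\|_{\mathrm{F}} = \|\hat{\Lambda}_I - \Lambda_I\|_{\mathrm{F}}$, which by the Hoffman--Wielandt inequality is at most $\|E\|_{\mathrm{F}}$; similarly $\|E\hat{V}\|_{\mathrm{F}} \leq \|E\|_{\mathrm{F}}$. Hence $\|\Sigma\hat{V} - \hat{V}\Lambda_I\|_{\mathrm{F}} \leq 2\|E\|_{\mathrm{F}}$, and the analogous operator-norm bound $\leq 2\|E\|_{\mathrm{op}}$ follows from Weyl's inequality ($\|\hat{\Lambda}_I - \Lambda_I\|_{\mathrm{op}} \leq \|E\|_{\mathrm{op}}$). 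Multiplying on the left by $V_\perp^T$ and using $V_\perp^T\Sigma = \Lambda_\perp V_\perp^T$ turns the left-hand side into $\Lambda_\perp M - M\Lambda_I$; since $\|V_\perp^T\|_{\mathrm{op}} = 1$ this inherits both norm bounds, yielding $\|\Lambda_\perp M - M\Lambda_I\|_{\mathrm{F}} \leq 2\|E\|_{\mathrm{F}}$ and $\|\Lambda_\perp M - M\Lambda_I\|_{\mathrm{op}} \leq 2\|E\|_{\mathrm{op}}$.

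Entrywise, $(\Lambda_\perp M - M\Lambda_I)_{jk} = (\lambda'_j - \lambda_{r+k-1}) M_{jk}$, and the eigen-gap hypothesis forces $|\lambda'_j - \lambda_{r+k-1}| \geq \delta_\star$ for all relevant $j,k$; summing squares gives $\delta_\star\|M\|_{\mathrm{F}} \leq \|\Lambda_\perp M - M\Lambda_I\|_{\mathrm{F}}$, and the standard separation-of-spectra inequality for Sylvester equations in unitarily invariant norms yields $\delta_\star\|M\|_{\mathrm{op}} \leq \|\Lambda_\perp M - M\Lambda_I\|_{\mathrm{op}}$. Combining with the bounds from the previous step gives $\|\sin\Theta(\hat{V},V)\|_{\mathrm{F}} \leq 2\|E\|_{\mathrm{F}}/\delta_\star$ and, via $\|M\|_{\mathrm{F}} \leq d^{1/2}\|M\|_{\mathrm{op}}$, also $\|\sin\Theta(\hat{V},V)\|_{\mathrm{F}} \leq 2d^{1/2}\|E\|_{\mathrm{op}}/\delta_\star$; the minimum is~(\ref{Eq:OurSinTheta}). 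For~(\ref{Eq:OurDifference}), I would invoke the standard Procrustes lemma: writing the SVD $\hat{V}^T V = U_1 (\cos\Theta) U_2^T$ and setting $\hat{O} := U_1 U_2^T$, one obtains $\|\hat{V}\hat{O} - V\|_{\mathrm{F}}^2 = 2\,\mathrm{tr}(I - \cos\Theta) \leq 2\|\sin\Theta(\hat{V},V)\|_{\mathrm{F}}^2$ from the elementary inequality $1 - \cos\theta \leq \sin^2\theta$ on $[0,\pi/2]$, so scaling~(\ref{Eq:OurSinTheta}) by $\sqrt{2}$ delivers~(\ref{Eq:OurDifference}). The main conceptual step is the choice of $\Lambda_I$ rather than $\hat{\Lambda}_I$ on the right of the shift in the first display: this relegates all sample eigenvalues to the controllable residual $\hat{V}(\hat{\Lambda}_I - \Lambda_I)$ and allows the Sylvester machinery to run with only the population gap $\delta_\star$, at the cost of a factor of two in the numerator relative to the classical Davis--Kahan bound.
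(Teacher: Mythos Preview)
Your proof is correct and follows essentially the same route as the paper: the same key identity $\Sigma\hat V-\hat V\Lambda_I=\hat V(\hat\Lambda_I-\Lambda_I)-E\hat V$, the same Weyl/Hoffman--Wielandt bounds on the residual, the same projection onto $V_\perp$ to obtain a Sylvester equation with population gap $\delta_\star$, and the same Procrustes step for~\eqref{Eq:OurDifference}. The only cosmetic difference is that the paper stays in Frobenius norm throughout (bounding $\|E\hat V\|_{\mathrm F}$ and $\|\hat\Lambda_I-\Lambda_I\|_{\mathrm F}$ each by $d^{1/2}\|E\|_{\mathrm{op}}$ directly), whereas you pass through an operator-norm Sylvester inequality and convert via $\|M\|_{\mathrm F}\le d^{1/2}\|M\|_{\mathrm{op}}$ at the end; both yield the same constant.
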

Apart from the fact that we only impose a population eigen-gap condition, the main difference between this result and that given in~Theorem~\ref{Thm:DKSinTheta} is in the $\min(d^{1/2}\|\hat{\Sigma} - \Sigma\|_{\mathrm{op}},\|\hat{\Sigma} - \Sigma\|_\mathrm{F})$ term in the numerator of the bounds.  In fact, the original statement of the Davis--Kahan $\sin \theta$ theorem has a numerator of $\|V\Lambda - \hat{\Sigma}V\|_{\mathrm{F}}$ in our notation, where $\Lambda := \mathrm{diag}(\lambda_r,\lambda_{r+1},\ldots,\lambda_s)$.  However, in order to apply that theorem in practice, statisticians have bounded this expression by $\|\hat{\Sigma} - \Sigma\|_{\mathrm{F}}$, yielding the bound in Theorem~\ref{Thm:DKSinTheta}.  When $p$ is large, though, one would often anticipate that $\|\hat{\Sigma} - \Sigma\|_{\mathrm{op}}$, which is the $\ell_\infty$ norm of the vector of eigenvalues of $\hat{\Sigma} - \Sigma$, may well be much smaller than $\|\hat{\Sigma} - \Sigma\|_{\mathrm{F}}$, which is the $\ell_2$ norm of this vector of eigenvalues.  Thus when $d \ll p$, as will often be the case in practice, the minimum in the numerator may well be attained by the first term.  It is immediately apparent from~(\ref{Eq:FirstLower}) and~(\ref{Eq:SecondLower}) in our proof that the smaller numerator $\|\hat{V}\Lambda - \Sigma \hat{V}\|_{\mathrm{F}}$ could also be used in our bound for $\|\sin \Theta(\hat{V},V)\|_\mathrm{F}$ in Theorem~\ref{Thm:Main}, while $2^{1/2}\|\hat{V}\Lambda - \Sigma\hat{V}\|_{\mathrm{F}}$ could be used in our bound for $\|\hat{V}\hat{O} - V\|_{\mathrm{F}}$.  Our reason for presenting the weaker bound in Theorem~\ref{Thm:Main} is to aid direct applicability; see Section~\ref{Sec:Examples} for several examples.

The constants presented in Theorem~\ref{Thm:Main} are sharp, as the following example illustrates.  Let $\Sigma = \mathrm{diag}(\lambda_1,\ldots,\lambda_p)$ and $\hat{\Sigma} = \mathrm{diag}(\hat{\lambda}_1,\ldots,\hat{\lambda}_p)$, where $\lambda_1 = \ldots = \lambda_d = 3$, $\lambda_{d+1} = \ldots = \lambda_p = 1$ and $\hat{\lambda}_1 = \ldots = \hat{\lambda}_{p-d} = 2-\epsilon$, $\hat{\lambda}_{p-d+1} = \ldots = \hat{\lambda}_p = 2$, where $\epsilon > 0$ and where $d \in \{1,\ldots,\lfloor p/2 \rfloor\}$.  If we are interested in the the eigenvectors corresponding to the largest $d$ eigenvalues, then for every orthogonal matrix $\hat{O} \in \mathbb{R}^{d \times d}$, 
\[
\|\hat{V}\hat{O} - V\|_{\mathrm{F}} = 2^{1/2}\|\sin \Theta(\hat{V}, V)\|_F = (2d)^{1/2} \leq (2d)^{1/2}(1+\epsilon) = \frac{2^{3/2}d^{1/2}\|\hat{\Sigma} - \Sigma\|_{\mathrm{op}}}{\lambda_d - \lambda_{d+1}}.
\]
In this example, the column spaces of $V$ and $\hat{V}$ were orthogonal.  However, even when these column spaces are close, our bound~(\ref{Eq:OurSinTheta}) is tight up to a factor of 2, while our bound~(\ref{Eq:OurDifference}) is tight up to a factor of $2^{3/2}$.  To see this, suppose that $\Sigma = \mathrm{diag}(3,1)$ while $\hat{\Sigma} =  \hat{V}\mathrm{diag}(3,1)\hat{V}^T$, where $\hat{V} = \begin{pmatrix} (1-\epsilon^2)^{1/2} & -\epsilon \\ \epsilon & (1-\epsilon^2)^{1/2}\end{pmatrix}$ for some $\epsilon > 0$.  If $v = (1,0)^T$ and $\hat{v} = \bigl((1-\epsilon^2)^{1/2},-\epsilon\bigr)^T$ denote the top eigenvectors of $\Sigma$ and $\hat{\Sigma}$ respectively, then
\[
\sin \Theta(\hat{v},v) = \epsilon, \quad \|\hat{v} - v\|^2 = 2 - 2(1-\epsilon^2)^{1/2}, \quad \text{and} \quad \frac{2\|\hat{\Sigma} - \Sigma\|_{\mathrm{op}}}{3-1} = 2\epsilon.
\]
It is also worth mentioning that there is another theorem in the \citet{DavisKahan1970} paper, the so-called `$\sin 2\theta$' theorem, which provides a bound for $\|\sin 2\Theta(\hat{V},V)\|_\mathrm{F}$ assuming only a population eigen-gap condition.  In the case $d=1$, this quantity can be related to the square of the length of the difference between the sample and population eigenvectors $\hat{v}$ and $v$ as follows:
\begin{equation}
\label{Eq:Sin2theta}
\sin^2 2\Theta(\hat{v},v) = (2\hat{v}^Tv)^2\{1-(\hat{v}^Tv)^2\} = \frac{1}{4}\|\hat{v} - v\|^2(2 - \|\hat{v} - v\|^2)(4 -\|\hat{v} - v\|^2).   
\end{equation}
Equation~(\ref{Eq:Sin2theta}) reveals, however, that $\|\sin 2\Theta(\hat{V},V)\|_\mathrm{F}$ is unlikely to be of immediate interest to statisticians, and in fact we are not aware of applications of the Davis--Kahan $\sin 2\theta$ theorem in Statistics.  No general bound for $\|\sin \Theta(\hat{V},V)\|_\mathrm{F}$ or $\|\hat{V}\hat{O} - V\|_\mathrm{F}$ can be derived from the Davis--Kahan $\sin 2\theta$ theorem since we would require further information such as $\hat{v}^Tv \geq 1/2^{1/2}$ when $d=1$, and such information would typically be unavailable.  The utility of our bound comes from the fact that it provides direct control of the main quantities of interest to statisticians.

Many if not most applications of this result will only need $s=r$, i.e. $d=1$.  In that case, the statement simplifies a little; for ease of reference, we state it as a corollary:
\begin{corollary}
Let $\Sigma,\hat{\Sigma} \in \mathbb{R}^{p \times p}$ be symmetric, with eigenvalues $\lambda_1 \geq \ldots \geq \lambda_p$ and $\hat{\lambda}_1 \geq \ldots \geq \hat{\lambda}_p$ respectively.  Fix $j \in \{1,\ldots,p\}$, and assume that $\min(\lambda_{j-1} - \lambda_j,\lambda_j - \lambda_{j+1}) > 0$, where $\lambda_0 := \infty$ and $\lambda_{p+1} := -\infty$.  If $v, \hat{v} \in \mathbb{R}^p$ satisfy $\Sigma v = \lambda_j v$ and $\hat{\Sigma} \hat{v} = \hat{\lambda}_j \hat{v}$, then
\[
\sin \Theta(\hat{v},v) \leq \frac{2\|\hat{\Sigma} - \Sigma\|_{\mathrm{op}}}{\min(\lambda_{j-1} - \lambda_j,\lambda_j - \lambda_{j+1})}. 
\]
Moreover, if $\hat{v}^T v \geq 0$, then
\[
\|\hat{v} - v\| \leq 
\frac{2^{3/2}\|\hat{\Sigma} - \Sigma\|_{\mathrm{op}}}{\min(\lambda_{j-1} - \lambda_j,\lambda_j - \lambda_{j+1})}.
\]
\end{corollary}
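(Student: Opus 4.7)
The plan is to derive the corollary as a direct specialisation of Theorem~\ref{Thm:Main} to the case $r = s = j$, so that $d = 1$, $V = v$ and $\hat{V} = \hat{v}$. Under this choice the hypothesis of Theorem~\ref{Thm:Main}, namely $\min(\lambda_{r-1}-\lambda_r,\lambda_s-\lambda_{s+1}) > 0$, is exactly the eigen-gap assumption of the corollary. Since $d^{1/2} = 1$ and since $\|\hat{\Sigma}-\Sigma\|_{\mathrm{op}} \leq \|\hat{\Sigma}-\Sigma\|_{\mathrm{F}}$ in general, the minimum in the numerators of~\eqref{Eq:OurSinTheta} and~\eqref{Eq:OurDifference} is attained by the operator-norm term. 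Moreover, when $d=1$ the Frobenius norm of $\sin \Theta(\hat{v},v)$ reduces to the single scalar $\sin\Theta(\hat{v},v)$, so~\eqref{Eq:OurSinTheta} yields the first displayed inequality of the corollary immediately.

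For the second inequality, I would again invoke~\eqref{Eq:OurDifference}, noting that the only orthogonal matrices in $\mathbb{R}^{1\times 1}$ are $\pm 1$. The theorem therefore produces a sign $\hat{O} \in \{-1,+1\}$ with $\|\hat{v}\hat{O} - v\| \leq 2^{3/2}\|\hat{\Sigma}-\Sigma\|_{\mathrm{op}}/\min(\lambda_{j-1}-\lambda_j,\lambda_j-\lambda_{j+1})$. The only step requiring a moment's thought is the sign handling: under the orientation hypothesis $\hat{v}^T v \geq 0$ we have
\[
\|\hat{v} - v\|^2 = 2 - 2\hat{v}^T v \leq 2 + 2\hat{v}^T v = \|\hat{v} + v\|^2,
\]
so $\|\hat{v} - v\| \leq \min_{\hat{O} \in \{\pm 1\}}\|\hat{v}\hat{O} - v\|$, and in particular $\|\hat{v}-v\|$ is bounded by the same right-hand side supplied by Theorem~\ref{Thm:Main}. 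Beyond this small sign bookkeeping there is no real obstacle; the corollary is essentially a restatement of Theorem~\ref{Thm:Main} in the one-dimensional case that will be most common in applications.
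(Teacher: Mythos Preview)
Your proposal is correct and matches the paper's treatment: the corollary is presented there without proof, as an immediate specialisation of Theorem~\ref{Thm:Main} to $r=s=j$, $d=1$. Your handling of the sign issue via $\|\hat v - v\|^2 = 2-2\hat v^T v \le 2+2\hat v^T v = \|\hat v + v\|^2$ under $\hat v^T v \ge 0$ is exactly the small bookkeeping step needed to pass from the existential $\hat O \in \{\pm 1\}$ in~\eqref{Eq:OurDifference} to the stated bound on $\|\hat v - v\|$.
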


\section{Applications of the Davis--Kahan theorem in statistical contexts}
\label{Sec:Examples}

In this section, we give several examples of ways in which the Davis--Kahan $\sin \theta$ theorem has been applied in the statistical literature.  Our selection is by no means exhaustive -- indeed there are many others of a similar flavour -- but it does illustrate a range of applications.  In fact, we also found some instances in the literature where a version of the Davis--Kahan theorem with a population eigen-gap condition was used without justification.  In all of the examples below, our results can be applied directly to impose more natural conditions, to simplify the proofs and, in some cases, to improve the bounds.  

\citet{Fanetal2013} study large covariance matrix estimation problems where the population covariance matrix can be represented as the sum of a low rank matrix and a sparse matrix.  Their Proposition~2 uses the operator norm version of Theorem~\ref{Thm:DKSinTheta} with $d=1$.  They then use a further bound from Weyl's inequality and a population eigen-gap condition as outlined in the introduction to control the norm of the difference between the leading sample and population eigenvectors.  \citet{MitraZhang2014} apply the theorem in a very similar way, but for general $d$ and for large correlation matrices as opposed to covariance matrices.  Again in the same spirit, \citet{FanHan2013} apply the result with $d=1$ to the problem of estimating the false discovery proportion in large-scale multiple testing with highly correlated test statistics.  Other similar applications include \citet{Elkaroui2008}, who derives consistency of sparse covariance matrix estimators, \citet{Caietal2013}, who study sparse principal component estimation, and \citet{WangNyquist1991}, who consider how eigenstructure is altered by deleting an observation. 

\citet{vonluxburg2007}, \citet{Roheetal2011}, \citet{Aminietal2013} and \citet{BhattacharyyaBickel2014} use the Davis--Kahan $\sin \theta$ theorem as a way of providing theoretical justification for spectral clustering in community detection with network data.  Here, the matrices of interest include graph Laplacians and adjacency matrices, both of which may or may not be normalised.  In these works, the statement of the Davis--Kahan theorem given is a slight variant of Theorem~\ref{Thm:DKSinTheta}, and it may appear from, e.g. Proposition~B.1 of \citet{Roheetal2011}, that only a population eigen-gap condition is assumed.  However, careful inspection reveals that $\Sigma$ and $\hat{\Sigma}$ must have the same number of eigenvalues in the interval of interest, so that their condition is essentially the same as that in Theorem~\ref{Thm:DKSinTheta}.

\section{Extension to general real matrices}
\label{Sec:SVD}

We now describe how the results of Section~\ref{Sec:Main} can be extended to situations where the matrices under study may not be symmetric and may not even be square, and where interest is in controlling the principal angles between corresponding singular vectors.  
\begin{thm}
\label{Thm:SVD}
Let $A, \hat{A} \in \mathbb{R}^{p \times q}$ have singular values $\sigma_1 \geq \ldots \geq \sigma_{\min(p,q)}$ and $\hat{\sigma}_1 \geq \ldots \geq \hat{\sigma}_{\min(p,q)}$ respectively.  Fix $1 \leq r \leq s \leq \mathrm{rank}(A)$ and assume that $\min(\sigma_{r-1}^2 - \sigma_r^2,\sigma_s^2 - \sigma_{s+1}^2) > 0$, where $\sigma_0^2 := \infty$ and $\sigma_{\mathrm{rank}(A)+1}^2 := -\infty$.  Let $d := s - r + 1$, and let $V = (v_r,v_{r+1},\ldots,v_s) \in \mathbb{R}^{q \times d}$ and $\hat{V} = (\hat{v}_r,\hat{v}_{r+1},\ldots,\hat{v}_s) \in \mathbb{R}^{q \times d}$ have orthonormal columns satisfying $A v_j = \sigma_j u_j$ and $\hat{A}\hat{v}_j = \hat{\sigma}_j \hat{u}_j$ for $j= r,r+1,\ldots,s$.  Then
\[
\|\sin \Theta(\hat{V},V)\|_{\mathrm{F}} \leq \frac{2(2\sigma_1 + \|\hat{A} - A\|_{\mathrm{op}})\min(d^{1/2}\|\hat{A} - A\|_{\mathrm{op}},\|\hat{A} - A\|_\mathrm{F})}{\min(\sigma_{r-1}^2 - \sigma_r^2,\sigma_s^2 - \sigma_{s+1}^2)}. 
\]
Moreover, there exists an orthogonal matrix $\hat{O} \in \mathbb{R}^{d \times d}$ such that 
\[
\|\hat{V}\hat{O} - V\|_{\mathrm{F}} \leq \frac{2^{3/2}(2\sigma_1 + \|\hat{A} - A\|_{\mathrm{op}})\min(d^{1/2}\|\hat{A} - A\|_{\mathrm{op}},\|\hat{A} - A\|_\mathrm{F})}{\min(\sigma_{r-1}^2 - \sigma_r^2,\sigma_s^2 - \sigma_{s+1}^2)}.
\]
\end{thm}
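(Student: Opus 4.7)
The plan is to reduce Theorem~\ref{Thm:SVD} to Theorem~\ref{Thm:Main} by the classical squaring trick: work with the symmetric positive semidefinite matrices $\Sigma := A^T A$ and $\hat\Sigma := \hat{A}^T\hat{A}$ in $\mathbb{R}^{q\times q}$. Since $Av_j=\sigma_j u_j$ implies $\Sigma v_j=\sigma_j A^Tu_j=\sigma_j^2 v_j$, and similarly for $\hat{A}$, the columns of $V$ and $\hat V$ are orthonormal eigenvectors of $\Sigma$ and $\hat\Sigma$ associated with their $r$th through $s$th largest eigenvalues. The eigenvalue list of $\Sigma$ in decreasing order is $\sigma_1^2\geq\cdots\geq\sigma_{\min(p,q)}^2$, padded with zeros if $q>\min(p,q)$, and the assumed population separation $\min(\sigma_{r-1}^2-\sigma_r^2,\,\sigma_s^2-\sigma_{s+1}^2)>0$ thus supplies the population eigen-gap required by Theorem~\ref{Thm:Main}, once the boundary conventions at indices $r-1$ and $s+1$ are matched against the $\lambda_0:=\infty$, $\lambda_{q+1}:=-\infty$ conventions used there.

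Invoking Theorem~\ref{Thm:Main} for the pair $(\Sigma,\hat\Sigma)$ immediately yields
\[
\|\sin\Theta(\hat V,V)\|_{\mathrm F} \;\leq\; \frac{2\min\bigl(d^{1/2}\|\hat\Sigma-\Sigma\|_{\mathrm{op}},\,\|\hat\Sigma-\Sigma\|_{\mathrm F}\bigr)}{\min(\sigma_{r-1}^2-\sigma_r^2,\,\sigma_s^2-\sigma_{s+1}^2)},
\]
together with an orthogonal $\hat O\in\mathbb{R}^{d\times d}$ for which $\|\hat V\hat O-V\|_{\mathrm F}$ satisfies the analogous bound with the extra factor $2^{1/2}$. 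It only remains to convert the perturbation of $A^TA$ in the numerator into a perturbation of $A$ itself.

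For this I would use the telescoping identity
\[
\hat A^T\hat A - A^T A \;=\; \hat A^T(\hat A - A) + (\hat A - A)^T A,
\]
together with the submultiplicative inequalities $\|BC\|_{\mathrm{op}}\leq\|B\|_{\mathrm{op}}\|C\|_{\mathrm{op}}$ and its mixed form $\|BC\|_{\mathrm F}\leq\|B\|_{\mathrm{op}}\|C\|_{\mathrm F}$ (the latter applied after transposing the second summand so that the remaining operator-norm factor is $\|A^T\|_{\mathrm{op}}=\sigma_1$), and the triangle-inequality bound $\|\hat A\|_{\mathrm{op}}\leq\sigma_1+\|\hat A-A\|_{\mathrm{op}}$. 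Combining these gives
\[
\|\hat\Sigma-\Sigma\|_{\mathrm{op}} \;\leq\; (2\sigma_1+\|\hat A-A\|_{\mathrm{op}})\|\hat A-A\|_{\mathrm{op}},
\]
and the same estimate holds with the Frobenius norm replacing the operator norm on each side. Taking the minimum of these two bounds (weighted by $d^{1/2}$ and $1$ respectively) and substituting into the Davis--Kahan estimate from Theorem~\ref{Thm:Main} produces exactly the inequalities claimed in Theorem~\ref{Thm:SVD} for both $\|\sin\Theta(\hat V,V)\|_{\mathrm F}$ and $\|\hat V\hat O-V\|_{\mathrm F}$.

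The overall structure is a clean reduction, so I do not expect a serious mathematical obstacle; the one point requiring care is the bookkeeping around the boundary conventions at positions $r-1$ and $s+1$, in particular confirming that the eigen-gap of the $q\times q$ matrix $A^TA$ (whose eigenvalues past position $\mathrm{rank}(A)$ are zero) is correctly identified with the stated condition on the $\sigma_j^2$ in the degenerate cases $r=1$ and $s=\mathrm{rank}(A)$. This is a routine verification rather than a genuine difficulty.
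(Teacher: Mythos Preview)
Your proposal is correct and follows essentially the same approach as the paper: reduce to Theorem~\ref{Thm:Main} via $\Sigma=A^TA$, $\hat\Sigma=\hat A^T\hat A$, then bound $\|\hat\Sigma-\Sigma\|$ in operator and Frobenius norm using a telescoping identity and $\|\hat A\|_{\mathrm{op}}\le\sigma_1+\|\hat A-A\|_{\mathrm{op}}$. The only cosmetic differences are that the paper writes the telescope as $(\hat A-A)^T\hat A+A^T(\hat A-A)$ and derives the mixed bound $\|BC\|_{\mathrm F}\le\|B\|_{\mathrm{op}}\|C\|_{\mathrm F}$ via a Kronecker-product/vectorisation argument rather than citing it directly.
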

Theorem~\ref{Thm:SVD} gives bounds on the proximity of the right singular vectors of $\Sigma$ and $\hat{\Sigma}$.  Identical bounds also hold if $V$ and $\hat{V}$ are replaced with the matrices of left singular vectors $U$ and $\hat{U}$, where $U = (u_r,u_{r+1},\ldots,u_s) \in \mathbb{R}^{p \times d}$ and $\hat{U} = (\hat{u}_r,\hat{u}_{r+1},\ldots,\hat{u}_s) \in \mathbb{R}^{p \times d}$ have orthonormal columns satisfying $A^T u_j = \sigma_j v_j$ and $\hat{A}^T\hat{u}_j = \hat{\sigma}_j \hat{v}_j$ for $j= r,r+1,\ldots,s$.  

As mentioned in the introduction, Theorem~\ref{Thm:SVD} can be viewed as a variant of the `generalized $\sin \theta$' theorem of \citet{Wedin1972}.  Again, the main difference is that our condition only requires a gap between the relevant population singular values.

Similar to the situation for symmetric matrices, there are many places in the statistical literature where Wedin's result has been used, but where we argue that Theorem~\ref{Thm:SVD} above would be a more natural result to which to appeal.  Examples include the papers of \citet{VanHuffelVandewalle1989} on the accuracy of least squares techniques, \citet{Anandkumaretal2014} on tensor decompositions for learning latent variable models, \citet{ShabalinNobel2013} on recovering a low rank matrix from a noisy version and \citet{SunZhang2012} on matrix completion.  

\section*{Acknowledgements}

The first and third authors are supported by the third author's Engineering and Physical Sciences Research Council Early Career Fellowship EP/J017213/1.  The second author is supported by a Benefactors' scholarship from St John's College, Cambridge.

\section{Appendix}

We first state an elementary lemma that will be useful in several places.
\begin{lemma}
\label{Lemma:Orthogonal}
Let $A \in \mathbb{R}^{m \times n}$, and let $U \in \mathbb{R}^{m \times p}$ and $W \in \mathbb{R}^{n \times q}$ both have orthonormal columns.  Then
\[
\|U^TAW\|_\mathrm{F} \leq \|A\|_\mathrm{F}.
\]
If instead, $U \in \mathbb{R}^{m \times p}$ and $W \in \mathbb{R}^{n \times q}$ both have orthonormal rows, then
\[
\|U^TAW\|_\mathrm{F} = \|A\|_\mathrm{F}.
\]
\end{lemma}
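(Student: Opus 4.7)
The plan is to reduce both statements to the cyclicity of the trace and the basic fact that the Loewner inequality $P \preceq I$ holds for any orthogonal projection $P$. I would begin by writing
\[
\|U^T A W\|_{\mathrm{F}}^2 = \mathrm{tr}(W^T A^T U U^T A W) = \mathrm{tr}\bigl(UU^T \, A \, WW^T \, A^T\bigr),
\]
where the second equality uses the cyclic property of the trace.

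For the first assertion, the columns of $U$ being orthonormal means $U^T U = I_p$, so $P_U := UU^T \in \mathbb{R}^{m \times m}$ is an orthogonal projection; in particular $0 \preceq P_U \preceq I_m$. Similarly $P_W := WW^T \preceq I_n$. Since $A W W^T A^T$ is positive semidefinite, $\mathrm{tr}(UU^T A WW^T A^T) \leq \mathrm{tr}(A WW^T A^T)$, and since $A^T A$ is positive semidefinite, one more application (after another cyclic reshuffling) gives $\mathrm{tr}(A WW^T A^T) = \mathrm{tr}(WW^T A^T A) \leq \mathrm{tr}(A^T A) = \|A\|_{\mathrm{F}}^2$, which combined yields the desired inequality.

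For the equality case, the hypothesis that $U$ and $W$ have orthonormal rows gives $UU^T = I_m$ and $WW^T = I_n$ exactly, so both of the inequalities above collapse to equalities and we conclude $\|U^T A W\|_{\mathrm{F}}^2 = \|A\|_{\mathrm{F}}^2$. There is essentially no obstacle here; the only thing worth flagging is making sure to keep track of which of the two dimensions ($p$ vs.\ $m$, $q$ vs.\ $n$) must be at most the other in each of the two cases so that the orthonormality condition is consistent, but this plays no role in the computation itself. An equally clean alternative, which I might use if brevity is preferred, is to extend $U$ to a square orthogonal matrix $\tilde U = (U, U_\perp)$ and likewise for $W$, observe that $\|\tilde U^T A \tilde W\|_{\mathrm{F}} = \|A\|_{\mathrm{F}}$ by orthogonal invariance of the Frobenius norm, and note that $U^T A W$ sits as a submatrix of $\tilde U^T A \tilde W$.
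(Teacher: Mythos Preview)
Your proof is correct. For the inequality, your primary argument (trace cyclicity plus the Loewner bound $UU^T,\,WW^T \preceq I$ applied against a positive semidefinite factor) is a genuinely different route from the paper's: the paper instead completes $U$ and $W$ to square orthogonal matrices and observes that $U^TAW$ is a submatrix of the full orthogonal conjugate of $A$, which has the same Frobenius norm as $A$. Your alternative at the end is exactly the paper's argument. The trace/Loewner route is arguably more self-contained (no need to invoke existence of an orthonormal completion), while the extension argument is perhaps more geometric; both are one-liners. For the equality case your computation coincides with the paper's.
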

\begin{proof}
For the first claim, find a matrix $U_1 \in \mathbb{R}^{m \times (m-p)}$ such that $\begin{pmatrix} U & U_1 \end{pmatrix}$ is orthogonal, and a matrix  $W_1 \in \mathbb{R}^{n \times (n-q)}$ such that $\begin{pmatrix} W & W_1 \end{pmatrix}$ is orthogonal.  Then
\[
\|A\|_\mathrm{F} = \Biggl\|\begin{pmatrix}U^T \\ U_1^T\end{pmatrix}A \begin{pmatrix}W & W_1\end{pmatrix}\Biggr\|_\mathrm{F} \geq  \Biggl\|\begin{pmatrix}U^T \\ U_1^T\end{pmatrix}AW\Biggr\|_\mathrm{F} \geq \|U^TAW\|_\mathrm{F}.
\]
For the second claim, observe that
\[
\|U^TAW\|_\mathrm{F}^2 = \mathrm{tr}(U^TAWW^TA^TU) = \mathrm{tr}(AA^TUU^T) = \mathrm{tr}(AA^T) = \|A\|_\mathrm{F}^2.
\]
\end{proof}

\begin{proof}[Proof of Theorem~\ref{Thm:Main}]
Let $\Lambda := \mathrm{diag}(\lambda_r,\lambda_{r+1},\ldots,\lambda_s)$ and $\hat{\Lambda} := \mathrm{diag}(\hat{\lambda}_r,\hat{\lambda}_{r+1},\ldots,\hat{\lambda}_s)$.  Then
\[
0 = \hat{\Sigma}\hat{V} - \hat{V}\hat{\Lambda} = \Sigma\hat{V} - \hat{V}\Lambda + (\hat{\Sigma} - \Sigma)\hat{V} - \hat{V}(\hat{\Lambda} - \Lambda).
\]
Hence
\begin{align}
\|\hat{V}\Lambda - \Sigma\hat{V}\|_{\mathrm{F}} &\leq \|(\hat{\Sigma} - \Sigma)\hat{V}\|_{\mathrm{F}} + \|\hat{V}(\hat{\Lambda} - \Lambda)\|_{\mathrm{F}} \nonumber \\
&\leq d^{1/2}\|\hat{\Sigma} - \Sigma\|_{\mathrm{op}} + \|\hat{\Lambda} - \Lambda\|_{\mathrm{F}} \leq 2d^{1/2}\|\hat{\Sigma} - \Sigma\|_{\mathrm{op}}, \label{Eq:Eigenvalue2}
\end{align}
where we have used Lemma~\ref{Lemma:Orthogonal} in the second inequality and Weyl's inequality \citep[e.g.][Corollary~4.9]{StewartSun1990} for the final bound.  Alternatively, we can argue that
\begin{align}
\label{Eq:Upper2}
\|\hat{V}\Lambda - \Sigma\hat{V}\|_{\mathrm{F}} &\leq \|(\hat{\Sigma} - \Sigma)\hat{V}\|_{\mathrm{F}} + \|\hat{V}(\hat{\Lambda} - \Lambda)\|_{\mathrm{F}} \nonumber \\
&\leq \|\hat{\Sigma} - \Sigma\|_{\mathrm{F}} + \|\hat{\Lambda} - \Lambda\|_{\mathrm{F}} \leq 2\|\hat{\Sigma} - \Sigma\|_{\mathrm{F}},
\end{align}
where the second inequality follows from two applications of Lemma~\ref{Lemma:Orthogonal}, and the final inequality follows from the Wielandt--Hoffman theorem \citep[e.g.][pp.~104--108]{Wilkinson1965}.

Let $\Lambda_1 := \mathrm{diag}(\lambda_1,\ldots,\lambda_{r-1},\lambda_{s+1},\ldots,\lambda_p)$, and let $V_1$ be a $p \times (p-d)$ matrix such that $P := \begin{pmatrix}V & V_1\end{pmatrix}$ is orthogonal and such that $P^T \Sigma P = \begin{pmatrix} \Lambda & 0 \\ 0 & \Lambda_1 \end{pmatrix}$.  Then 
\begin{align}
\label{Eq:FirstLower}
\|\hat{V}\Lambda - \Sigma\hat{V}\|_{\mathrm{F}} &= \|VV^T\hat{V}\Lambda + V_1V_1^T \hat{V} \Lambda - V\Lambda V^T\hat{V} - V_1 \Lambda_1 V_1^T\hat{V}\|_{\mathrm{F}} \nonumber \\
&\geq \|V_1V_1^T\hat{V}\Lambda - V_1\Lambda_1 V_1^T\hat{V}\|_{\mathrm{F}} \geq \|V_1^T\hat{V}\Lambda - \Lambda_1 V_1^T\hat{V}\|_{\mathrm{F}},
\end{align}
where the first inequality follows because $V^TV_1 = 0$, and the second from another application of Lemma~\ref{Lemma:Orthogonal}.  For real matrices $A$ and $B$, we write $A \otimes B$ for their Kronecker product \citep[e.g.][p.~30]{StewartSun1990} and $\mathrm{vec}(A)$ for the vectorisation of $A$, i.e. the vector formed by stacking its columns.  We recall the standard identity $\mathrm{vec}(ABC) = (C^T \otimes A)\mathrm{vec}(B)$, which holds whenever the dimensions of the matrices are such that the matrix multiplication is well-defined.  We also write $I_m$ for the $m$-dimensional identity matrix.  Then
\begin{align}
\label{Eq:SecondLower}
\|V_1^T\hat{V}\Lambda - \Lambda_1 V_1^T\hat{V}\|_{\mathrm{F}} &= \|(\Lambda \otimes I_{p-d} - I_d \otimes \Lambda_1)\mathrm{vec}(V_1^T\hat{V})\| \nonumber \\
&\geq \min(\lambda_{r-1} - \lambda_r,\lambda_s - \lambda_{s+1})\|\mathrm{vec}(V_1^T\hat{V})\| \nonumber \\
&= \min(\lambda_{r-1} - \lambda_r,\lambda_s - \lambda_{s+1})\|\sin \Theta(\hat{V},V)\|_\mathrm{F},
\end{align}
since
\[
\|\mathrm{vec}(V_1^T\hat{V})\|^2 = \mathrm{tr}(\hat{V}^TV_1V_1^T\hat{V}) = \mathrm{tr}\bigl((I_p - VV^T)\hat{V}\hat{V}^T\bigr) = d - \|\hat{V}^TV\|_\mathrm{F}^2 = \|\sin \Theta(\hat{V},V)\|_\mathrm{F}^2.
\]
We deduce from~(\ref{Eq:SecondLower}), (\ref{Eq:FirstLower}), (\ref{Eq:Upper2}) and~(\ref{Eq:Eigenvalue2}) that
\[
\|\sin \Theta(\hat{V},V)\|_\mathrm{F} \leq \frac{\|V_1^T\hat{V}\Lambda - \Lambda_1 V_1^T\hat{V}\|_{\mathrm{F}}}{\min(\lambda_{r-1} - \lambda_r,\lambda_s - \lambda_{s+1})} \leq \frac{2\min(d^{1/2}\|\hat{\Sigma} - \Sigma\|_{\mathrm{op}},\|\hat{\Sigma} - \Sigma\|_\mathrm{F})}{\min(\lambda_{r-1} - \lambda_r,\lambda_s - \lambda_{s+1})},
\]
as required.  

For the second conclusion, by a singular value decomposition, we can find orthogonal matrices $\hat{O}_1, \hat{O}_2 \in \mathbb{R}^{d \times d}$ such that $\hat{O}_1^T\hat{V}^TV\hat{O}_2 = \mathrm{diag}(\cos \theta_1,\ldots,\cos \theta_d)$, where $\theta_1,\ldots,\theta_d$ are the principal angles between the column spaces of $V$ and $\hat{V}$.  Setting $\hat{O} := \hat{O}_1 \hat{O}_2^T$, we have
\begin{align}
\label{Eq:Intermediate}
\|\hat{V}\hat{O} - V\|_{\mathrm{F}}^2 &= \mathrm{tr}\bigl((\hat{V}\hat{O} - V)^T(\hat{V}\hat{O} - V)\bigr) = 2d - 2\mathrm{tr}(\hat{O}_2\hat{O}_1^T\hat{V}^TV) \nonumber \\
&= 2d - 2\sum_{j=1}^d \cos \theta_j \leq 2d - 2\sum_{j=1}^d \cos^2 \theta_j  = 2\|\sin \Theta(\hat{V},V)\|_{\mathrm{F}}^2.
\end{align}
The result now follows from our first conclusion.
\end{proof}
\begin{proof}[Proof of Theorem~\ref{Thm:SVD}]
Note that $A^TA, \hat{A}^T\hat{A} \in \mathbb{R}^{q \times q}$ are symmetric, with eigenvalues $\sigma_1^2 \geq \ldots \geq \sigma_q^2$ and $\hat{\sigma}_1^2 \geq \ldots \geq \hat{\sigma}_q^2$ respectively.  Moreover, we have $A^TAv_j = \sigma_j^2 v_j$ and $\hat{A}^T\hat{A}\hat{v}_j = \hat{\sigma}_j^2 \hat{v}_j$ for $j=r,r+1,\ldots,s$.  We deduce from Theorem~\ref{Thm:Main} that
\begin{equation}
\label{Eq:SVDsintheta}
\|\sin \Theta(\hat{V},V)\|_{\mathrm{F}} \leq \frac{2\min(d^{1/2}\|\hat{A}^T\hat{A} - A^TA\|_{\mathrm{op}},\|\hat{A}^T\hat{A} - A^TA\|_\mathrm{F})}{\min(\sigma_{r-1}^2 - \sigma_r^2,\sigma_s^2 - \sigma_{s+1}^2)}. 
\end{equation}
Now, by the submultiplicity of the operator norm,
\begin{align}
\label{Eq:SVDop}
\|\hat{A}^T\hat{A} - A^TA\|_{\mathrm{op}} = \|(\hat{A}-A)^T\hat{A} + A^T(\hat{A}-A)\|_{\mathrm{op}} &\leq (\|\hat{A}\|_{\mathrm{op}} + \|A\|_{\mathrm{op}})\|\hat{A}-A\|_{\mathrm{op}} \nonumber \\
&\leq (2\sigma_1 + \|\hat{A}-A\|_{\mathrm{op}})\|\hat{A}-A\|_{\mathrm{op}}.
\end{align}
On the other hand,
\begin{align}
\label{Eq:SVDFrobenius}
\|\hat{A}^T\hat{A} - A^TA\|_{\mathrm{F}} &= \|(\hat{A}-A)^T\hat{A} + A^T(\hat{A}-A)\|_{\mathrm{F}}  \nonumber\\
&\leq \|(\hat{A}^T \otimes I_q)\mathrm{vec}\bigl((\hat{A}-A)^T\bigr)\| + \|(I_p \otimes A^T)\mathrm{vec}(\hat{A}-A)\| \nonumber \\
&\leq (\|\hat{A}^T \otimes I_q\|_\mathrm{op} + \|I_p \otimes A^T\|_\mathrm{op})\|\hat{A}-A\|_{\mathrm{F}} \nonumber \\
&\leq (2\sigma_1 + \|\hat{A}-A\|_{\mathrm{op}})\|\hat{A}-A\|_{\mathrm{F}}.
\end{align}
We deduce from~(\ref{Eq:SVDsintheta}), (\ref{Eq:SVDop}) and~(\ref{Eq:SVDFrobenius}) that
\[
\|\sin \Theta(\hat{V},V)\|_{\mathrm{F}} \leq \frac{2(2\sigma_1 + \|\hat{A} - A\|_{\mathrm{op}})\min(d^{1/2}\|\hat{A} - A\|_{\mathrm{op}},\|\hat{A} - A\|_\mathrm{F})}{\min(\sigma_{r-1}^2 - \sigma_r^2,\sigma_s^2 - \sigma_{s+1}^2)}.
\]
The bound for $\|\hat{V}\hat{O}-V\|_{\mathrm{F}}$ now follows immediately from this and~(\ref{Eq:Intermediate}).
\end{proof}

\end{document}